\definecolor{labelkey}{rgb}{0.6,0,1}
\theoremstyle{plain}
\newtheorem{theorem}{Theorem}[section]
\newtheorem{assumptions}[theorem]{Assumptions}
\theoremstyle{definition}
\newtheorem{definition}[theorem]{Definition}
\def\bhyp#1{\begin{equation}\label{#1}\begin{array}{c}}
\def\ehyp{\end{array}\end{equation}}
\newcounter{cst}
\theoremstyle{remark}
\numberwithin{equation}{section}
\numberwithin{figure}{section}
\newcommand{\RR}{{\mathbb R}}
\newcommand{\NN}{{\mathbb N}}
\def\O{\Omega}
\def\dsp{\displaystyle}
\def\disc{{\mathcal D}}
\def\mesh{{\mathcal M}}
\def\cF{{\mathcal F}}
\def\edge{\sigma}
\newcommand{\edgesext}{{{\cF}_{\rm ext}}} 
\newcommand{\edgesint}{{{\cF}_{\rm int}}} 
\def\dr{\partial}
\newcommand{\cI}{{\mathcal I}}
\DeclareMathOperator*{\argminB}{argmin}
\newcommand{\x}{\pmb{x}}
\newif\ifcorr\corrtrue
\definecolor{violet}{rgb}{0.580,0.,0.827}
\def\bpsi{{\boldsymbol \psi}}
\newcommand{\ud}{\, \mathrm{d}} 
\def\div{\mathop{\rm div}}
\title{The gradient discretisation method for the chemical reactions of biochemical systems}
\author{Yahya Alnashri}
\address[Yahya Alnashri]{Department of Mathematics, Al-Qunfudah University College, Umm Al-Qura University, Saudi Arabia}
\email{yanashri@uqu.edu.sa}
\author{Hasan Alzubaidi}
\address[Hasan Alzubaidi]{Department of Mathematics, Al-Qunfudah University College, Umm Al-Qura University, Saudi Arabia}
\email{hmzubaidi@uqu.edu.sa}
\subjclass[2010]{35K57,65N12,65M08}
\keywords{A gradient discretisation method (GDM), Gradient schemes, Convergence analysis, Existence of weak solutions, Two-dimensional reaction – diffusion Brusselator system, Dirichlet boundary conditions, Non conforming finite element methods, Finite volume schemes, Hybrid mixed mimetic (HMM) method.}     
\date{\today}
\begin{document}
\newcommand{\subscript}[2]{$#1 _ #2$}

\begin{abstract}
We consider a biochemical model that consists of a system of partial differential equations based on reaction terms and subject to non--homogeneous Dirichlet boundary conditions. The model is discretised using the gradient discretisation method (GDM) which is a framework covering a large class of conforming and non conforming schemes. Under classical regularity assumptions on the exact solutions, the GDM enables us to establish the existence of the model solutions in a weak sense, and strong convergence for the approximate solution and its approximate gradient. Numerical test employing a finite volume method is presented to demonstrate the behaviour of the solutions to the model. 
\end{abstract}

\maketitle

\section{Introduction}
\label{introduction}
In this paper, we are interested in studding reaction diffusion equations:
\begin{subequations}\label{problem-rm}
\begin{align}
\partial_t \bar u(\x,t) -\mu_{1} \div(\nabla \bar u(\x,t))&=F(\bar u,\bar v), \quad (\x,t) \in \O\times (0,T),\label{rm1}\\
\partial_t \bar v(\x,t) -\mu_{2} \div(\nabla \bar v(\x,t))&=G(\bar u,\bar v), \quad (\x,t) \in \O\times (0,T\label{rm2}),
\end{align}
\end{subequations}
where $\mu_{1}$ and $\mu_{2}$ are the diffusion coefficients corresponding to the chemical concentrations $\bar u$ and $\bar v$ respectively. The functions $F$ and $G$ describe the governing kinetics of the chemical reactions and some biochemical phenomena have been expressed in literature based on the choice of these reaction terms. For an example, in 2014, Yung–Rong Lee and Sy-Sang Lia \cite{1} proposed a reaction-diffusion model that stimulated the relationship between concentrations of oxygen and lactic acid to simulate oxidative phosphorylation and glycolysis reactions in tissues. They concluded that a reaction-diffusion model can generate and maintain the ideal micro-environment for stem cells.

\par Another example is the Gray-Scott model, a very well-known reaction-diffusion system. The model describes the chemical reaction between two substances, an activator $\bar v$ and substrate $\bar u$ where both of which diffuse over time and so represents what is called cubic autocatalysis \cite{2}. 

 \par The Brusselator model is also an example of such chemical reaction systems which is used to describe mechanism of chemical reaction – diffusion with non-linear oscillations \cite{3}. The model occurs in a large number of physical problems such as the formation of ozone by atomic oxygen, in enzymatic reactions, and arises in laser and plasma physics from multiple coupling between modes\cite{4,5}. 

\par The gradient discretisation method (GDM) is a generic framework to design numerical schemes together with their convergence analysis for different models which are based on partial differential equations. It covers a variety of numerical schemes such as finite volumes, finite elements, discontinuous Galerkin, etc. We refer the reader to \cite{31,33,34,35,36,37,38,Y-2021} and the monograph \cite{30} for a complete presentation. The main purpose of this paper is the proof of convergence, in the GDM setting, of the approximate solutions to the weak solution of a system of reaction diffusion equations with non--homogeneous Dirichlet boundary conditions. The convergence is established without non--physical regularity assumptions on the solutions since it is based on discrete compactness techniques detailed in \cite{44}.

\par The paper is organised as follows. Section \ref{sec-conts} introduces the continuous model and its weak formulation. Section \ref{sec-disc} describes the gradient discretisation method for the model and states four required properties. Section \ref{sec-main} states the theorem corresponding to the convergence results. In Section \ref{sec-num}, we include numerical test by employing a finite volume scheme, namely the Hybrid Mimetic Mixed (HMM) method, to study and analyse the behaviour of the solutions of the Brusselator model as an example of the biochemical systems. The resultant relative errors with respect to the mesh size are also studied.

\section{Continuous model}\label{sec-conts}
We consider the following biochemical system of partial differential equations:
\begin{align}
\partial_t \bar u(\x,t) -\mu_1 \div(\nabla \bar u(\x,t))&=F(\bar u,\bar v), \quad (\x,t) \in \O\times (0,T),\label{rm-strong1}\\
\partial_t \bar v(\x,t) -\mu_{2} \div(\nabla \bar v(\x,t))&=G(\bar u,\bar v), \quad (\x,t) \in \O\times (0,T),\label{rm-strong2}\\
\bar u(\x,t)&= g, \quad (\x,t) \in\partial\O\times (0,T), \label{rm-strong3}\\
\bar u(\x,t)&= h, \quad (\x,t) \in\partial\O\times (0,T), \label{rm-strong6}\\
\bar u(\x,0)&=u_{\rm ini}(\x), \quad \x \in \O, \label{rm-strong4}\\
\bar v(\x,0)&=v_{\rm ini}(\x), \quad \x \in \O. \label{rm-strong5}
\end{align}

Our analysis focuses on the weak formulation of the above reaction diffusion model. Let us assume the following properties on the data of the model.
\begin{assumptions}\label{assump-rm}
The assumptions on the data in Problem \eqref{rm-strong1}--\eqref{rm-strong5} are the following:
\begin{itemize}
\item $\O$ is an open bounded connected subset of $\RR^d\; (d > 1)$, $T >0$, $\mu_1,\; \mu_2 \in \RR^+$,
\item $(u_{\rm ini},v_{\rm ini})$ are in $L^\infty(\O) \times L^\infty(\O)$,
\item $g$ and $h$ are traces of functions in $L^2(0,T;H^1(\O))$ whose time derivatives are in $L^2(0,T;H^{-1}(\O))$,
\item the functions $F,\; G: \RR^2 \to \RR$ are Lipschitz continuous with Lipschitz constants $L_F$ and $L_G$, respectively.
\end{itemize}
\end{assumptions}

Under Assumptions \ref{assump-rm}, the weak solution of \eqref{rm-strong1}--\eqref{rm-strong5} is seeking
\begin{subequations}\label{rm-weak}
\begin{equation*}
\begin{aligned}
&\bar u \in \{w\in L^2(0,T;H^1(\O))\;:\; \gamma(\bar u)=g \}, \bar v \in \{w\in L^2(0,T;H^1(\O))\;:\; \gamma(\bar v)=h \} \mbox{ such that }\\
&\bar u \in L^2(0,T;H^1(\O)) \cap C([0,T];L^2(\O)),\; \bar v\in L^2(0,T;H^1(\O)) \cap C([0,T];L^2(\O)),\\
&\forall \varphi \in L^2(0,T;H_0^1(\O)),\; \partial_t \varphi \in L^2(0,T;L^2(\O)),\; \varphi(\cdot,T)=0 ,\\
&\forall \psi \in L^2(0,T;H_0^1(\O)),\; \partial_t \psi \in L^2(0,T;L^2(\O)),\;\psi(\cdot,T)=0,
\end{aligned}
\end{equation*}
\begin{equation}\label{rm-weak1}
\begin{aligned}
-\dsp\int_0^T \dsp\int_\O \bar u(\x,t) \partial_t\varphi(\x,t) \ud \x  \ud t 
&+\mu_1\dsp\int_0^T\int_\O \nabla \bar u(\x,t) \cdot \nabla \varphi(\x,t)\ud \x \ud t,\\
&-\dsp\int_\O u_{\rm ini}(\x)\varphi(\x,0) \ud x\\
{}&\quad= \dsp\int_0^T\dsp\int_\O F(\bar u,\bar v)\varphi(\x,t) \ud \x \ud t,
\end{aligned}
\end{equation}
\begin{equation}\label{rm-weak2}
\begin{aligned}
-\dsp\int_0^T \dsp\int_\O  \bar v(\x,t)  \partial_t\psi(\x,t) \ud \x \ud t
&+\mu_2\dsp\int_0^T\int_\O \nabla \bar v(\x,t) \cdot \nabla \psi(\x,t)\ud \x \ud t\\ 
&-\dsp\int_\O v_{\rm ini}(\x)\psi(\x,0) \ud x\\
{}&\quad= \dsp\int_0^T\dsp\int_\O G(\bar u,\bar v)\psi(\x,t) \ud \x \ud t.
\end{aligned}
\end{equation}
\end{subequations}

\section{Discrete Problem}\label{sec-disc}
The analysis of numerical schemes for the approximation of solutions to our model is performed using the GDM. The first step to reach this analysis is the reconstruction of a set of discrete spaces and operators, which is called gradient discretisation.

\begin{definition}[Gradient discretisation for time--dependent problems with non--homogeneous Dirichlet boundary conditions]\label{def-gd-rm}
Let $\O$ be an open subset of $\RR^d$ (with $d > 1$) and $T>0$. A gradient discretisation for time--dependent problems with non--homogeneous Dirichlet boundary conditions is $\disc=(X_{\disc}, \mathcal I_{\disc,\dr}, \Pi_\disc, \nabla_\disc, J_\disc, (t^{(n)})_{n=0,...,N}) )$, where
\begin{itemize}
\item the set of discrete unknowns $X_{\disc}=X_{\disc,0}\oplus X_{\disc,\dr\O}$ is the direct sum of two finite dimensional spaces on $\RR$, corresponding respectively to the interior unknowns and to the boundary unknowns,
\item the linear mapping $\mathcal I_{\disc,\dr}: H^{\frac{1}{2}}(\dr\O) \to X_{\disc,\dr\O}$ is an interpolation operator for the trace,
\item the function reconstruction $\Pi_\disc : X_{\disc} \to L^2(\O)$ is a linear,
\item the gradient reconstruction $\nabla_\disc : X_{\disc} \to L^2(\O)^d$ is a linear and must be defined so that $\| \cdot ||_\disc := || \nabla_\disc \cdot ||_{L^2(\O)^d}$ defines a norm on $X_{\disc,0}$,
\item $J_\disc: L^\infty(\O) \to X_\disc$ is a linear and continuous interpolation operators for the
 initial conditions,
\item $t^{(0)}=0<t^{(1)}<....<t^{(N)}=T$ are discrete times. 
\end{itemize}
\end{definition}

Let us introduce some notations to define the space--time reconstructions $\Pi_\disc \varphi: \O\times[0,T]\to \RR$, and $\nabla_\disc \varphi: \O\times[0,T]\to \RR^d$, and the discrete time derivative $\delta_\disc \varphi : (0,T) \to L^2(\O)$, for $\varphi=(\varphi^{(n)})_{n=0,...,N} \in X_\disc^{N}$. 

For a.e $\x\in\O$, for all $n\in\{0,...,N-1 \}$ and for all $t\in (t^{(n)},t^{(n+1)}]$, let
\begin{equation*}
\begin{split}
&\Pi_\disc \varphi(\x,0)=\Pi_\disc \varphi^{(0)}(\x), \quad \Pi_\disc \varphi(\x,t)=\Pi_\disc \varphi^{(n+1)}(\x),\\
&\nabla_\disc \varphi(\x,t)=\nabla_\disc \varphi^{(n+1)}(\x).
\end{split}
\end{equation*}
Set $\delta t^{(n+\frac{1}{2})}=t^{(n+1)}-t^{(n)}$ and $\delta t_\disc=\max_{n=0,...,N-1}\delta t^{(n+\frac{1}{2})}$, to define
\begin{equation*}
\delta_\disc \varphi(t)=\delta_\disc^{(n+\frac{1}{2})}\varphi:=\frac{\Pi_\disc(\varphi^{(n+1)}-\varphi^{(n)})}{\delta t^{(n+\frac{1}{2})}}.\end{equation*}

\par Setting the gradient discretisation defined previously in the place of the continuous space and operators in the weak formulation of the model leads to a numerical scheme, called a gradient scheme.

\begin{definition}[Gradient scheme]\label{def-gs-rm} The gradient scheme for the continuous problem \eqref{rm-weak} is to find families of pair $(u^{(n)}, v^{(n)})_{n=0,...,N} \in  (\cI_{\disc,\dr}g+ X_{\disc,0}^{N+1}) \times (\cI_{\disc,\dr}h+ X_{\disc,0}^{N+1})$ such that $(u^{(0)}, v^{(0)})=(J_\disc u_{\rm ini}, J_\disc v_{\rm ini})$, and for all $n=0,...,N-1$, $u^{(n+1)}$ and $v^{(n+1)}$ satisfy the following equalities:
\begin{subequations}\label{rm-disc-pblm}
\begin{equation}\label{rm-disc-pblm1}
\begin{aligned}
&\dsp\int_\O \delta_\disc^{(n+\frac{1}{2})} u(\x) \Pi_\disc \varphi(\x)
+ \mu_1\dsp\int_\O \nabla_\disc u^{(n+1)}(\x) \cdot \nabla_\disc \varphi(\x)\ud \x\\
&\qquad=\dsp\int_\O F(\Pi_\disc u^{(n+1)}(\x), \Pi_{\disc} v^{(n+1)}(\x))\Pi_\disc \varphi(\x) \ud \x, \quad \forall \varphi \in X_{\disc,0}, \mbox{ and }
\end{aligned}
\end{equation}
\begin{equation}\label{rm-disc-pblm2}
\begin{aligned}
&\dsp\int_\O \delta_\disc^{(n+\frac{1}{2})} v(\x) \Pi_\disc \psi(\x)
+ \mu_2\dsp\int_\O \nabla_\disc v^{(n+1)}(\x) \cdot \nabla_\disc \psi(\x)\ud \x\\
&\qquad=\dsp\int_\O G(\Pi_\disc u^{(n+1)}(\x), \Pi_{\disc} v^{(n+1)}(\x))\Pi_\disc \psi(\x) \ud \x, \quad \forall \psi \in X_{\disc,0}.
\end{aligned}
\end{equation}
\end{subequations}
\end{definition}

In order to establish the stability and convergence of the above gradient scheme, sequences of gradient discretisations $(\disc_m)_{m\in\NN}$ described in Definition \ref{sec-disc} are required to satisfy four properties; coercivity, consistency, limit--conformity and compactness.

\begin{definition}[{Coercivity}]
Let $\disc$ be a gradient discretisation and let $C_\disc$ be defined by
\begin{equation}
C_\disc = \max_{\varphi\in X_{\disc,0}-\{0\}}\dsp\frac{\|\Pi_\disc \varphi \|_{L^2(\O)}}{\| \nabla_\disc \varphi \|_{L^2(\O)^d}}.
\end{equation}
A sequence $(\disc_m)_{m\in\NN}$ of a gradient discretisation is coercive if $C_\disc$ is bounded.
\end{definition}

\begin{definition}[Consistency]\label{def:cons-rm}
If $\disc$ is a gradient discretisation, define the function
$S_{\mathcal{D}} : H^1(\O)\to [0, +\infty)$ by, for $\varphi\in H^1(\O)$,
\begin{equation}
\begin{aligned}
S_{\mathcal{D}}(\varphi)= 
\min\Big\{\| \Pi_{\mathcal{D}} w - \varphi \|_{L^{2}(\Omega)}
+ \| \nabla_{\mathcal{D}} w - \nabla \varphi \|_{L^{2}(\Omega)^{d}}\;:\;
w\in X_\disc,\\ 
\mbox{ such that } w-\mathcal I_{\disc,\dr}\gamma\varphi \in X_{\disc,0}\Big\},
\end{aligned}
\end{equation}
A sequence $(\mathcal{D}_{m})_{m \in \mathbb{N}}$ of gradient discretisations is \emph{consistent} if, as $m \to \infty$ 
\begin{itemize}
\item for all $\varphi \in H^1(\O)$, $S_{\disc_m}(\varphi) \to 0$,
\item for all $w \in L^2(\O)$, $\Pi_{\disc_m}J_{\disc_m}w \to w$ in $L^2(\O)$,
\item $\delta t_{\disc_m} \to 0$.
\end{itemize}
\end{definition}

\begin{definition}[Limit-conformity]\label{def:lconf-rm}
If $\disc$ is a gradient discretisation, and the space $H_{\rm div}=\{\bpsi \in L^2(\O)^d\;:\; {\rm div}\bpsi \in L^2(\O) \}$, define the function $W_{\mathcal{D}} : H_{\rm div} \to [0, +\infty)$ by, for all $\bpsi \in H_{\rm div}$,
\begin{equation}\label{long-rm}
W_{\mathcal{D}}(\bpsi)
 = \sup_{w\in X_{\disc,0}\setminus \{0\}}\frac{\Big|\dsp\int_{\Omega}(\nabla_{\mathcal{D}}w\cdot \bpsi + \Pi_{\mathcal{D}}w \div (\bpsi)) \ud x \Big|}{\| \nabla_\disc w\|_{L^2(\O)^d} }.
\end{equation}
A sequence $(\disc_m)_{m\in \NN}$ of gradient discretisations is \emph{limit-conforming} if for all $\bpsi \in H_{\rm div}$, $W_{\disc_m}(\bpsi) \to 0$, as $m \to \infty$.
\end{definition}


\begin{definition}[Compactness]\label{def:compact}
A sequence of gradient discretisation $(\disc_m)_{m\in \NN}$ in the sense of Definition \ref{def-gd-rm} is \emph{compact} if for any sequence $(\varphi_m )_{m\in\NN} \in X_{\disc_m,0}$, such that $(\| \nabla_{\disc_m}\varphi_m \|_{L^2(\O)^d})_{m\in \NN}$ is bounded, the sequence $(\Pi_{\disc_m}\varphi_m )_{m\in \NN}$ is relatively compact in $L^2(\O)$.
\end{definition}

\begin{definition}[Dual norm on $\Pi_\disc(X_\disc)$]\label{def-dual}
If $\disc$ be a gradient discretisation, the dual norm $\| \cdot \|_{\star,\disc}$ on $\Pi_\disc(X_{\disc,0})$ is defined by, for all $w\in \Pi_\disc(X_{\disc,0})$,
\begin{equation}\label{eq-dual}
\| w \|_{\star,\disc}=\dsp\sup\Big\{ \dsp\int_\O w(\x)\Pi_\disc \varphi(\x)\ud \x \; :\; \varphi\in X_{\disc,0}, \| \nabla_\disc\varphi \|_{L^2(\O)^d} =1
\Big\}. 
\end{equation}
\end{definition}

\section{Convergence results}\label{sec-main}

Our convergence results are stated in the following theorem.

\begin{theorem}[Convergence of the gradient scheme]
\label{theorem-conver-rm} Assume \eqref{assump-rm} and let $(\disc_m)_{m\in\NN}$ be a sequence of gradient discretisations, that is coercive, consistent, limit-conforming and compact. For $m \in \NN$, let $(u_m,v_m) \in (\cI_{\disc_m,\dr}g+ X_{\disc_m,0}^{N+1}) \times (\cI_{\disc_m,\dr}h+ X_{\disc_m,0}^{N+1})$ be a solution to the gradient scheme \eqref{rm-disc-pblm} with $\disc=\disc_m$. Then there exists a weak solution $(\bar u,\bar v)$ of \eqref{rm-weak} and a subsequence of gradient discretisations, still denoted by $(\disc_m)_{m\in\NN}$, such that, as $m \to \infty$,
\begin{enumerate}
\item $\Pi_{\disc_m} u_m$ converges strongly to $\bar u$ in $L^\infty(0,T;L^2(\O))$,
\item $\Pi_{\disc_m} v_m$ converges strongly to $\bar v$ in $L^\infty(0,T;L^2(\O))$,
\item $\nabla_{\disc_m}u_m$ converges strongly to $\nabla\bar u$ in $L^2(\O\times(0,T))^d$,
\item $\nabla_{\disc_m}v_m$ converges strongly to $\nabla\bar v$ in $L^2(\O\times(0,T))^d$. 
\end{enumerate}
\end{theorem}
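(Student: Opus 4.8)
The plan is to follow the standard GDM convergence roadmap (as developed in \cite{30,44}), adapted to a coupled system whose reaction terms $F,G$ are globally Lipschitz — hence of at most linear growth — and to non-homogeneous Dirichlet data. \textbf{Step 1 (a priori estimates).} Fix $m$ and write $\disc=\disc_m$. For each $n$, I would test \eqref{rm-disc-pblm1} with $\varphi=u^{(n+1)}-\cI_{\disc,\dr}g^{(n+1)}\in X_{\disc,0}$ and \eqref{rm-disc-pblm2} with $\psi=v^{(n+1)}-\cI_{\disc,\dr}h^{(n+1)}\in X_{\disc,0}$, where $g^{(n+1)},h^{(n+1)}$ are time-discretisations of $g,h$; multiplying by $\delta t^{(n+\frac12)}$, summing over $n$, and using $a(a-b)\ge\frac12(a^2-b^2)$ on the discrete-time-derivative terms together with coercivity. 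The bound $|F(a,b)|\le|F(0,0)|+L_F(|a|+|b|)$ controls the right-hand side by $C(1+\|\Pi_\disc u^{(n+1)}\|_{L^2(\O)}+\|\Pi_\disc v^{(n+1)}\|_{L^2(\O)})\|\nabla_\disc\varphi\|_{L^2(\O)^d}$ via $C_\disc$. A discrete Gronwall argument then gives, uniformly in $m$, bounds on $\|\nabla_{\disc_m}u_m\|_{L^2(\O\times(0,T))^d}$, $\|\nabla_{\disc_m}v_m\|_{L^2(\O\times(0,T))^d}$ and on $\max_n\|\Pi_{\disc_m}u_m^{(n)}\|_{L^2(\O)}$, $\max_n\|\Pi_{\disc_m}v_m^{(n)}\|_{L^2(\O)}$; feeding these back into the scheme and using Definition \ref{def-dual} yields a uniform bound on $\|\delta_{\disc_m}u_m\|_{L^2(0,T;\star,\disc_m)}$ and likewise for $v$.

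\textbf{Step 2 (compactness and identification of weak limits).} The uniform gradient bound, the dual-norm bound on the discrete time derivative, and the compactness property (Definition \ref{def:compact}) place us in the framework of the discrete Aubin–Simon theorem of \cite{44,30}: up to a subsequence, $\Pi_{\disc_m}u_m\to\bar u$ and $\Pi_{\disc_m}v_m\to\bar v$ strongly in $L^2(\O\times(0,T))$. The bounded sequences $\nabla_{\disc_m}u_m,\nabla_{\disc_m}v_m$ converge weakly in $L^2(\O\times(0,T))^d$; limit-conformity (Definition \ref{def:lconf-rm}) identifies these weak limits with $\nabla\bar u,\nabla\bar v$ and gives $\bar u,\bar v\in L^2(0,T;H^1(\O))$, while consistency of the trace interpolator $\cI_{\disc_m,\dr}$ forces $\gamma(\bar u)=g$ and $\gamma(\bar v)=h$.

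\textbf{Step 3 (passing to the limit in the scheme).} For a smooth test function $\varphi$ with $\varphi(\cdot,T)=0$ and $\varphi(\cdot,t)\in H_0^1(\O)$, I take $\varphi^{(n)}$ to be a minimiser in the consistency functional $S_{\disc_m}$ at $\varphi(\cdot,t^{(n)})$, so that $\Pi_{\disc_m}\varphi_m\to\varphi$ and $\nabla_{\disc_m}\varphi_m\to\nabla\varphi$. Putting $\varphi=\varphi^{(n+1)}$ in \eqref{rm-disc-pblm1}, multiplying by $\delta t^{(n+\frac12)}$, summing, and performing an Abel summation on the time term to transfer the difference onto the test function produces $-\int_0^T\int_\O\Pi_{\disc_m}u_m\,\delta_{\disc_m}\varphi_m$ plus the boundary-in-time term $-\int_\O\Pi_{\disc_m}J_{\disc_m}u_{\rm ini}\,\Pi_{\disc_m}\varphi^{(0)}$ (the $n=N$ term vanishing by $\varphi(\cdot,T)=0$). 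Using the strong $L^2$ convergence of $\Pi_{\disc_m}u_m$, the consistency relation $\Pi_{\disc_m}J_{\disc_m}u_{\rm ini}\to u_{\rm ini}$, the weak–strong pairing in the diffusion term, and the Lipschitz continuity of $F$ together with the strong $L^2$ convergence of \emph{both} $\Pi_{\disc_m}u_m$ and $\Pi_{\disc_m}v_m$ to pass to the limit in $\int_0^T\int_\O F(\Pi_{\disc_m}u_m,\Pi_{\disc_m}v_m)\Pi_{\disc_m}\varphi_m$, I recover \eqref{rm-weak1}; \eqref{rm-weak2} is identical. Density extends the identities to all admissible test functions, and the usual argument gives $\bar u,\bar v\in C([0,T];L^2(\O))$, so $(\bar u,\bar v)$ is a weak solution of \eqref{rm-weak}.

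\textbf{Step 4 (uniform-in-time and strong gradient convergence); the main obstacle.} To upgrade the convergence of $\Pi_{\disc_m}u_m$ to $L^\infty(0,T;L^2(\O))$ I invoke the uniform-in-time discrete compactness result of \cite{30}, using the dual-norm time-derivative bound and strong $L^2(\O)$ convergence of $\Pi_{\disc_m}J_{\disc_m}u_{\rm ini}$. For the strong convergence of the gradients, I re-test the scheme with $u^{(n+1)}-\cI_{\disc_m,\dr}g^{(n+1)}$ to obtain a discrete energy equality, pass to the limit in it, and compare with the continuous energy equality obtained by testing \eqref{rm-weak1} with $\bar u$ — legitimate since $\partial_t\bar u\in L^2(0,T;H^{-1}(\O))$ follows from the equation and $\partial_t g\in L^2(0,T;H^{-1}(\O))$ by hypothesis — which yields $\limsup_m\mu_1\|\nabla_{\disc_m}u_m\|_{L^2(\O\times(0,T))^d}^2\le\mu_1\|\nabla\bar u\|_{L^2(\O\times(0,T))^d}^2$; with weak lower semicontinuity this forces convergence of the norms and hence strong convergence, and likewise for $v$. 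I expect this last step to be the delicate one: it requires controlling the discrete boundary liftings $\cI_{\disc_m,\dr}g^{(n+1)}$ and their discrete time derivatives, correctly matching the numerical-dissipation remainders $\tfrac12\|\Pi_{\disc_m}(u^{(n+1)}-u^{(n)})\|_{L^2(\O)}^2$ with the continuous energy balance, and handling the reaction coupling inside the energy identity — which is precisely why the strong $L^2$ convergence of both components, not merely weak gradient convergence, is used throughout.
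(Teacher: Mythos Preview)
Your roadmap --- a priori estimates, dual-norm bound on the discrete time derivative, discrete Aubin--Simon compactness, identification via limit-conformity, passage to the limit, and upgrade to $L^\infty(0,T;L^2)$ and strong gradient convergence by energy comparison --- is the paper's (the paper in fact defers most of the last three steps to \cite[Theorem~3.2]{30}). The Lipschitz/Gronwall handling of the coupled reaction is also the same.

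The genuine gap is your choice of discrete lifting of the non-homogeneous data. You test with $u^{(n+1)}-\cI_{\disc,\dr}g^{(n+1)}$ and in Step~4 plan to ``control the discrete boundary liftings $\cI_{\disc_m,\dr}g^{(n+1)}$ and their discrete time derivatives''. But the GDM axioms give \emph{no} control on $\Pi_\disc\cI_{\disc,\dr}g$, $\nabla_\disc\cI_{\disc,\dr}g$, or $\delta_\disc\cI_{\disc,\dr}g$: the operator $\cI_{\disc,\dr}$ only prescribes the $X_{\disc,\dr\O}$ component, and the consistency property (Definition~\ref{def:cons-rm}) concerns the \emph{minimiser} in $S_\disc$, not $\cI_{\disc,\dr}\gamma\varphi$ itself. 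In concrete schemes (e.g.\ HMM with interior unknowns set to zero) $\nabla_\disc\cI_{\disc,\dr}g$ can blow up like $h^{-1}$, so the cross-term $\int_\O\nabla_\disc\cI_{\disc,\dr}g\cdot\nabla_\disc\varphi$ in your energy estimate is not absorbable, and likewise the trace identification ``$\gamma(\bar u)=g$ by consistency of $\cI_{\disc_m,\dr}$'' does not follow from anything stated.

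The paper repairs this by first lifting $g$ to $\bar g\in L^2(0,T;H^1(\O))$, approximating $\bar g$ by space--time tensorial functions $\sum_i\phi_i(t)p_i(\x)$ with $\phi_i\in C^\infty([0,T])$, $p_i\in H^1(\O)$, and setting $g_\disc^{(n)}=\sum_i\phi_i(t^{(n)})I_\disc p_i$, where $I_\disc p_i$ is the argmin in the definition of $S_\disc(p_i)$. Consistency then yields $\Pi_{\disc_m}g_{\disc_m}\to\bar g$, $\nabla_{\disc_m}g_{\disc_m}\to\nabla\bar g$, and (via the smoothness of the $\phi_i$) $\delta_{\disc_m}g_{\disc_m}\to\partial_t\bar g$, all strongly in $L^2$. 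The homogenised unknowns $w_1=u-g_\disc$, $w_2=v-h_\disc$ lie in $X_{\disc,0}^{N+1}$, your Step~1 estimates go through with $w_1,w_2$ in place of $u-\cI_{\disc,\dr}g$, $v-\cI_{\disc,\dr}h$, and the trace condition $\gamma\bar u=g$ is recovered because the weak limit of $\Pi_{\disc_m}w_{1,m}$ lies in $H_0^1(\O)$ while $\Pi_{\disc_m}g_{\disc_m}\to\bar g$. Once you replace $\cI_{\disc,\dr}g$ by this consistency-based lifting, the rest of your argument is correct.
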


\begin{proof}
The proof relies on the compactness arguments as in \cite{30}, and is divided into four stages.\\

\noindent {\bf Step 1:}
Take liftings $\bar g \in L^2(0,T;H^1(\O))$ of $g$ and $\bar h \in L^2(0,T;H^1(\O))$ of $h$, such that $\gamma\bar g=g$ and $\gamma\bar h=h$. Thanks to the density of space--time tensorial functions in the space $L^2(0,T;H^1(\O))$ established in \cite[Corollary 1.3.1]{J-density}, we can express the liftings $\bar g$ and $\bar h$ in the following way: let $\ell \in \NN$, $(\phi)_{i=1,...,\ell},\; (\xi)_{i=1,...,\ell} \subset C^{\infty}([0,T])$, and $(p_i)_{i=1,...,\ell},\; (q_i)_{i=1,...,\ell} \subset H^1(\O)$ such that
\[
\bar g(\x,t)=\dsp\sum_{i=1}^{\ell}\phi_i(t)p_i(\x) \mbox{ and a.e. $\x\in\O$ and for all $t\in (0,T)$},
\]   
\[
\bar h(\x,t)=\dsp\sum_{i=1}^{\ell}\xi_i(t)q_i(\x) \mbox{ and a.e. $\x\in\O$ and for all $t\in (0,T)$}. 
\]   
Let $g_\disc,\; h_\disc \in X_{\disc}^{N_m+1}$ be defined by $g_\disc^{(n)}=\phi(t^{(n)})I_\disc p$ and $h_\disc^{(n)}=\xi(t^{(n)})I_\disc q$ for $0,...,N_m$, where
\[
I_\disc p = \argminB_{\varphi \in \cI_{\disc,\dr g}+X_{\disc,0}}\left(
\| \Pi_\disc \varphi-p \|_{L^2(\O)}
+\| \nabla_\disc \varphi-\nabla p \|_{L^2(\O)^d}
\right),
\]
\[
I_\disc q = \argminB_{\varphi \in \cI_{\disc,\dr h}+X_{\disc,0}}\left(
\| \Pi_\disc \varphi-q \|_{L^2(\O)}
+\| \nabla_\disc \varphi-\nabla q\|_{L^2(\O)^d}
\right).
\]
From the consistency property, as $m \to \infty$, we have
\begin{enumerate}
\item $\Pi_{\disc_m}g_{\disc_m} \to \bar g$ strongly in $L^2(\O \times (0,T))$ and $\Pi_{\disc_m}h_{\disc_m} \to \bar h$ strongly in $L^2(\O \times (0,T))$,
\item $\nabla_{\disc_m}g_{\disc_m} \to \nabla\bar g$ strongly in $L^2(\O \times (0,T))^d$ and $\nabla_{\disc_m}h_{\disc_m} \to \nabla\bar h$ strongly in $L^2(\O \times (0,T))^d$,
\item $\delta_{\disc_m}^{(n+\frac{1}{2})}g_{\disc_m} \to \dr_t \bar g$ strongly in $L^2(\O \times (0,T))$ and $\delta_{\disc_m}^{(n+\frac{1}{2})}h_{\disc_m} \to \dr_t \bar h$ strongly in $L^2(\O \times (0,T))$. 
\end{enumerate}
For any solution $(u,v)$ to the gradient scheme \eqref{rm-disc-pblm}, writing $w_1=u-g_\disc \in X_{\disc,0}$ and $w_2=v-h_\disc \in X_{\disc,0}$, we have for all $\varphi,\; \psi \in X_{\disc,0}$,
\begin{subequations}\label{rm-disc-pblm-new}
\begin{equation}\label{rm-disc-pblm1-new}
\begin{aligned}
&\dsp\int_\O \delta_\disc^{(n+\frac{1}{2})} w_1(\x) \Pi_\disc \varphi(\x)
+ \mu_1\dsp\int_\O \nabla_\disc w_1^{(n+1)}(\x) \cdot \nabla_\disc \varphi(\x)\ud \x
\\
&=\dsp\int_\O F(\Pi_\disc(w_1+g_\disc), \Pi_\disc(w_2+h_\disc))\Pi_\disc \varphi(\x) \ud \x\\
&\quad-\dsp\int_\O \delta_\disc^{(n+\frac{1}{2})} g_\disc(\x) \Pi_\disc \varphi(\x)
-\mu_1\dsp\int_\O \nabla_\disc g_\disc^{(n+1)}(\x) \cdot \nabla_\disc \varphi(\x)\ud \x,
\end{aligned}
\end{equation}
\begin{equation}\label{rm-disc-pblm2-new}
\begin{aligned}
&\dsp\int_\O \delta_\disc^{(n+\frac{1}{2})} w_2(\x) \Pi_\disc \psi(\x)
+ \mu_2\dsp\int_\O \nabla_\disc w_2^{(n+1)}(\x) \cdot \nabla_\disc \psi(\x)\ud \x
\\
&=\dsp\int_\O G(\Pi_\disc(w_1+g_\disc), \Pi_\disc(w_2+h_\disc))\Pi_\disc \psi(\x) \ud \x\\
&\quad-\dsp\int_\O \delta_\disc^{(n+\frac{1}{2})} h_\disc(\x) \Pi_\disc \psi(\x)
-\mu_2\dsp\int_\O \nabla_\disc h_\disc^{(n+1)}(\x) \cdot \nabla_\disc \psi(\x)\ud \x.
\end{aligned}
\end{equation}
\end{subequations}

\vskip 1pc
\noindent {\bf Step 2:} We need to have estimates on the quantities $\|\Pi_\disc w_1(t) \|_{L^\infty(0,T;L^2(\O))}$, $\|\Pi_\disc w_2(t) \|_{L^\infty(0,T;L^2(\O))}$,   $\|\nabla_\disc w_1 \|_{ L^2(\O \times (0,T))^d }$, and $\|\nabla_\disc w_2 \|_{ L^2(\O \times (0,T))^d }$.

Let $n \in \{ 0,...,N-1\}$ and put $\varphi:=\delta t^{ (n+\frac{1}{2}) }w_1^{(n+1)}$ in \eqref{rm-disc-pblm1-new} and $\psi:=\delta t^{ (n+\frac{1}{2}) }w_2^{(n+1)}$ in \eqref{rm-disc-pblm2-new}. We have
\begin{equation*}
\begin{aligned}
&\dsp\int_\O \Big(\Pi_\disc w_1^{(n+1)}(\x)-\Pi_\disc w_1^{(n)}(\x)\Big) \Pi_\disc w_1^{(n+1)}(\x) \ud \x\\
&\qquad+\mu_1\delta t^{(n+\frac{1}{2})}\int_\O\nabla_\disc w_1^{(n+1)}(\x) \cdot \nabla_\disc w_1^{(n+1)}(\x) \ud \x\\
&=\delta t^{(n+\frac{1}{2})}\dsp\int_\O F(\Pi_\disc (w_1^{(n+1)}(\x)+g_\disc), \Pi_\disc (w_2^{(n+1)}(\x)+h_\disc)) \Pi_\disc w_1^{(n+1)}(\x)  \ud \x\\
&\qquad-\delta t^{(n+\frac{1}{2})}\dsp\int_\O \delta_\disc^{(n+\frac{1}{2})} g_\disc(\x) \Pi_\disc w_1^{(n+1)}(\x) \ud \x\\
&\qquad-\mu_1\delta t^{(n+\frac{1}{2})}\int_\O\nabla_\disc g_\disc^{(n+1)}(\x) \cdot \nabla_\disc w_1^{(n+1)}(\x) \ud \x,\quad \mbox{and}
\end{aligned}
\end{equation*}
\begin{equation*}
\begin{aligned}
&\dsp\int_\O \Big(\Pi_\disc w_2^{(n+1)}(\x)-\Pi_\disc w_2^{(n)}(\x)\Big) \Pi_\disc w_2^{(n+1)}(\x) \ud \x\\
&\qquad+\mu_2\delta t^{(n+\frac{1}{2})}\int_\O\nabla_\disc w_2^{(n+1)}(\x) \cdot \nabla_\disc w_2^{(n+1)}(\x) \ud \x \ud t\\
&=\delta t^{(n+\frac{1}{2})}\dsp\int_\O G(\Pi_\disc (w_1^{(n+1)}(\x)+g_\disc), \Pi_\disc (w_2^{(n+1)}(\x)+h_\disc)) \Pi_\disc w_2^{(n+1)}(\x)  \ud \x\\
&\qquad-\delta t^{(n+\frac{1}{2})}\dsp\int_\O \delta_\disc^{(n+\frac{1}{2})} h_\disc(\x) \Pi_\disc w_2^{(n+1)}(\x) \ud \x\\
&\qquad-\mu_2\delta t^{(n+\frac{1}{2})}\int_\O\nabla_\disc h_\disc^{(n+1)}(\x) \cdot \nabla_\disc w_2^{(n+1)}(\x) \ud \x.
\end{aligned}
\end{equation*}
Apply the inequality, $a, b \in \RR$, $(a-b)a \geq \frac{1}{2}( |a|^2 -|b|^2 )$, to the first terms in the above equalities, sum on $n=0,...,m-1$, for some $m=0,...,N$ and apply the Cauchy--Schwarz inequality to obtain   
\begin{equation*}
\begin{aligned}
&\frac{1}{2}\| \Pi_\disc w_1^{(m)}\|_{L^2(\O)}^2-\frac{1}{2}\| \Pi_\disc w_1^{(0)} \|^2 
+\mu_1\sum_{n=0}^{m-1}\delta t^{(n+\frac{1}{2})} \|\nabla_\disc w_1^{(n)} \|_{L^2(\O)^d}^2 \\
&\leq \dsp\sum_{n=0}^{m-1}\delta t^{(n+\frac{1}{2})} \left\| F(\Pi_\disc (w_1^{(n+1)}+g_\disc^{(n+1)}), \Pi_\disc (w_2^{(n+1)}+h_\disc^{(n+1)})) \right\|_{L^2(\O)} \| \Pi_\disc w_1^{(n+1)} \|_{L^2(\O)}\\
&\quad+\sum_{n=0}^{m-1}\delta t^{(n+\frac{1}{2})}\|\Pi_\disc w_1^{(n+1)} \|_{L^2(\O)} \|\delta_\disc^{(n+\frac{1}{2})} g_\disc \|_{L^2(\O)}\\
&\quad+\mu_1\sum_{n=0}^{m-1}\delta t^{(n+\frac{1}{2})} \|\nabla_\disc w_1^{(n)} \|_{L^2(\O)^d} \|\nabla_\disc g_\disc^{(n)} \|_{L^2(\O)^d}, \quad \mbox{ and }
\end{aligned}
\end{equation*}
\begin{equation*}
\begin{aligned}
&\frac{1}{2}\| \Pi_\disc w_2^{(m)}\|_{L^2(\O)}^2-\frac{1}{2}\| \Pi_\disc w_2^{(0)} \|^2 
+\mu_1\sum_{n=0}^{m-1}\delta t^{(n+\frac{1}{2})} \|\nabla_\disc w_2^{(n)} \|_{L^2(\O)^d}^2 \\
&\leq \dsp\sum_{n=0}^{m-1}\delta t^{(n+\frac{1}{2})} \left\| G(\Pi_\disc (w_1^{(n+1)}+g_\disc^{(n+1)}), \Pi_\disc (w_2^{(n+1)}+h_\disc^{(n+1)})) \right\|_{L^2(\O)} \| \Pi_\disc w_1^{(n+1)} \|_{L^2(\O)}\\
&\qquad+\sum_{n=0}^{m-1}\|\Pi_\disc w_2^{(n+1)} \|_{L^2(\O)} \|\delta_\disc^{(n+\frac{1}{2})}h_\disc^{(n+1)} \|_{L^2(\O)}\\
&\qquad+\mu_2\sum_{n=0}^{m-1}\delta t^{(n+\frac{1}{2})} \|\nabla_\disc w_2^{(n)} \|_{L^2(\O)^d} \|\nabla_\disc h_\disc^{(n)} \|_{L^2(\O)^d}.
\end{aligned}
\end{equation*}
From the Lipschitz continuous assumptions on $F$ and $G$, one has, with letting $L:\max(L_F,L_G)$ and $C_0=\max(F({\bf 0}),G({\bf 0}))$,
\begin{equation*}
\begin{aligned}
&\frac{1}{2}\| \Pi_\disc w_1^{(m)}\|_{L^2(\O)}^2-\frac{1}{2}\| \Pi_\disc w_1^{(0)} \|^2 
+\mu_1\sum_{n=0}^{m-1}\delta t^{(n+\frac{1}{2})} \|\nabla_\disc w_1^{(n)} \|_{L^2(\O)^d}^2 \\
&\leq \dsp\sum_{n=0}^{m-1}\delta t^{(n+\frac{1}{2})}\Big[
L \Big( \| \Pi_\disc w_1^{(n)}\|_{L^2(\O)}^2 + \| \Pi_\disc w_1^{(n)}\|_{L^2(\O)} \| \Pi_\disc g_\disc^{(n)}\|_{L^2(\O)}\\
 &\qquad+ \| \Pi_\disc w_1^{(n)}\|_{L^2(\O)} \| \Pi_\disc w_2^{(n)}\|_{L^2(\O)}  + \| \Pi_\disc w_1^{(n)}\|_{L^2(\O)} \| \Pi_\disc h_\disc^{(n)}\|_{L^2(\O)}\Big)\\
&\qquad+C_0\| \Pi_\disc w_1^{(n)}\|_{L^2(\O)}
 \Big]
+\sum_{n=0}^{m-1}\delta t^{(n+\frac{1}{2})}\|\Pi_\disc w_1^{(n+1)} \|_{L^2(\O)} \|\delta_\disc^{(n+\frac{1}{2})}g_\disc \|_{L^2(\O)}\\
&\qquad+\mu_1\sum_{n=0}^{m-1}\delta t^{(n+\frac{1}{2})} \|\nabla_\disc w_1^{(n)} \|_{L^2(\O)^d} \|\nabla_\disc g_\disc^{(n)} \|_{L^2(\O)^d}, \quad \mbox{ and }
\end{aligned}
\end{equation*}
\begin{equation*}
\begin{aligned}
&\frac{1}{2}\| \Pi_\disc w_2^{(m)}\|_{L^2(\O)}^2-\frac{1}{2}\| \Pi_\disc w_2^{(0)} \|^2 
+\mu_1\sum_{n=0}^{m-1}\delta t^{(n+\frac{1}{2})} \|\nabla_\disc w_2^{(n)} \|_{L^2(\O)^d}^2 \\
&\leq \dsp\sum_{n=0}^{m-1}\delta t^{(n+\frac{1}{2})}\Big[
L \Big( \| \Pi_\disc w_2^{(n)}\|_{L^2(\O)}^2 + \| \Pi_\disc w_2^{(n)}\|_{L^2(\O)} \| \Pi_\disc g_\disc^{(n)}\|_{L^2(\O)}\\
 &\qquad+ \| \Pi_\disc w_2^{(n)}\|_{L^2(\O)} \| \Pi_\disc w_1^{(n)}\|_{L^2(\O)}  + \| \Pi_\disc w_2^{(n)}\|_{L^2(\O)} \| \Pi_\disc h_\disc^{(n)}\|_{L^2(\O)}\Big)\\
&\qquad+C_0\| \Pi_\disc w_2^{(n)}\|_{L^2(\O)}
 \Big]
+\sum_{n=0}^{m-1}\delta t^{(n+\frac{1}{2})}\|\Pi_\disc w_2^{(n+1)} \|_{L^2(\O)} \|\delta_\disc^{(n+\frac{1}{2})}h_\disc \|_{L^2(\O)}\\
&\qquad+\mu_1\sum_{n=0}^{m-1}\delta t^{(n+\frac{1}{2})} \|\nabla_\disc w_1^{(n)} \|_{L^2(\O)^d} \|\nabla_\disc h_\disc^{(n)} \|_{L^2(\O)^d}.
\end{aligned}
\end{equation*}
Then, using the Young's inequality in the right--hand side of the inequalities, we conclude
\begin{equation*}
\begin{aligned}
&\frac{1}{2}\| \Pi_\disc w_1^{(m)}\|_{L^2(\O)}^2-\frac{1}{2}\| \Pi_\disc w_1^{(0)} \|^2 
+\mu_1\sum_{n=0}^{m-1}\delta t^{(n+\frac{1}{2})} \|\nabla_\disc w_1^{(n)} \|_{L^2(\O)^d}^2 \\
&\leq M_1\dsp\sum_{n=0}^{m-1}\delta t^{(n+\frac{1}{2})}
\| \Pi_\disc w_1^{(n)}\|_{L^2(\O)}^2 
+\frac{1}{2\varepsilon_2}\dsp\sum_{n=0}^{m-1}\delta t^{(n+\frac{1}{2})}\| \Pi_\disc w_2^{(n)}\|_{L^2(\O)}^2\\ 
&\qquad+\frac{T}{2\varepsilon_4}C_0^2
+\frac{\mu_1}{2}\sum_{n=0}^{m-1}\delta t^{(n+\frac{1}{2})} \|\nabla_\disc w_1^{(n)} \|_{L^2(\O)^d}^2 
+TM_1C_1,\quad { and }
\end{aligned}
\end{equation*}
\begin{equation*}
\begin{aligned}
&\frac{1}{2}\| \Pi_\disc w_2^{(m)}\|_{L^2(\O)}^2-\frac{1}{2}\| \Pi_\disc w_2^{(0)} \|^2 
+\mu_2\sum_{n=0}^{m-1}\delta t^{(n+\frac{1}{2})} \|\nabla_\disc w_2^{(n)} \|_{L^2(\O)^d}^2 \\
&\leq M_3\dsp\sum_{n=0}^{m-1}\delta t^{(n+\frac{1}{2})}
\| \Pi_\disc w_2^{(n)}\|_{L^2(\O)}^2 
+\frac{1}{2\varepsilon_7}\dsp\sum_{n=0}^{m-1}\delta t^{(n+\frac{1}{2})}\| \Pi_\disc w_1^{(n)}\|_{L^2(\O)}^2\\ 
&\qquad+\frac{T}{2\varepsilon_9}C_0^2
+\frac{\mu_1}{2}\sum_{n=0}^{m-2}\delta t^{(n+\frac{1}{2})} \|\nabla_\disc w_2^{(n)} \|_{L^2(\O)^d}^2 
+TM_2C_1,
\end{aligned}
\end{equation*}
where $M_1:=L+\sum_{i=1}^5\dsp\frac{\varepsilon_i}{2}$, $M_2:=L+\sum_{i=6}^{10}\dsp\frac{\varepsilon_i}{2}$, and $C_1$ depends on $C_\disc$, $\|\nabla_\disc g_\disc\|_{L^2(\O)^d}$ and $\|\nabla_\disc h_\disc\|_{L^2(\O)^d}$, which are bounded. The desired estimates follow from combining the above inequalities together and take the supremum on $m=0,...,N$.

\vskip 1pc
\noindent {\bf Step 3:} We need to established estimates on $\| w_1 \|_{\star,\disc}$ and $\| w_2 \|_{\star,\disc}$. Take generic test functions $\varphi$ and $\psi$ in \eqref{rm-disc-pblm-new}. Use the Cauchy--Schwarz inequality to get, thanks to assumptions \eqref{assump-rm} and to the coercivity properties
\[
\begin{aligned}
&\dsp\int_\O \delta_\disc^{(n+\frac{1}{2})} w_1(\x) \Pi_\disc \varphi(\x) \ud \x\\
&\leq C_\disc \| \nabla_\disc\varphi \|_{L^2(\O)^d}
\Big[
L\Big( \| \Pi_\disc w_1^{(n+1)} \|_{L^2(\O\times(0,T))}
+\| \Pi_\disc w_2^{(n+1)} \|_{L^2(\O\times(0,T))}\\
\qquad&+\| \Pi_\disc g_\disc^{(n+1)} \|_{L^2(\O\times(0,T))}
+\| \Pi_\disc h_\disc^{(n+1)} \|_{L^2(\O\times(0,T))}\Big)+C_0\\
\qquad&
+\| \delta_\disc^{(n+\frac{1}{2})} g_\disc^{(n+1)} \|_{L^2(\O\times(0,T))}
+\mu_1 \| \nabla_\disc w_1^{(n+1)} \|_{L^2(\O\times(0,T))^d}
\Big],
\end{aligned}
\]
\[
\begin{aligned}
&\dsp\int_\O \delta_\disc^{(n+\frac{1}{2})} w_2(\x) \Pi_\disc \psi(\x) \ud \x\\
&\leq C_\disc\| \nabla_\disc\psi \|_{L^2(\O)^d}
\Big[
L\Big( \| \Pi_\disc w_1^{(n+1)} \|_{L^2(\O\times(0,T))}
+\| \Pi_\disc w_2^{(n+1)} \|_{L^2(\O\times(0,T))}\\
\qquad&+\| \Pi_\disc g_\disc^{(n+1)} \|_{L^2(\O\times(0,T))}
+\| \Pi_\disc h_\disc^{(n+1)} \|_{L^2(\O\times(0,T))}\Big)+C_0\\
\qquad&
+\| \delta_\disc^{(n+\frac{1}{2})} h_\disc^{(n+1)} \|_{L^2(\O\times(0,T))}
+\mu_1 \| \nabla_\disc w_2^{(n+1)} \|_{L^2(\O\times(0,T))^d}.
\Big].
\end{aligned}
\]
The desired estimates is then obtained by taking the supremum over $\varphi,\;\psi\in X_{\disc,0}$ with $\| \nabla_\disc \varphi  \|_{L^2(\O)^d}=\| \nabla_\disc \psi  \|_{L^2(\O)^d}=1$, multiplying by $\delta t^{(n+1)}$, summing over $n=0,...,N-1$, thanks to the estimates obtained in the previous step.

\vskip 1pc
\noindent {\bf Step 4:}
Owing to these estimates and the strong convergence of $\Pi_{\disc_m}g_{\disc_m}$, $\Pi_{\disc_m}h_{\disc_m}$, $\nabla_{\disc_m}g_{\disc_m}$, and $\nabla_{\disc_m}h_{\disc_m}$, the remaining of the proof is then similar to that of \cite[Theorem 3.2 ]{30}. 
\end{proof}
\section{Numerical Results}\label{sec-num}
To measure the efficiency of the gradient scheme \eqref{rm-disc-pblm} for the continuous problem \eqref{rm-strong1}--\eqref{rm-strong5}, we consider a particular choice of the gradient discrtisation method known as the Hybrid Mimetic Mixed (HMM) method, which is a kind of finite volume scheme and can be written in three different formats; the hybrid finite volume method \cite{D-2010-SUSHI}, the (mixed--hybrid) mimetic finite differences methods \cite{Brezzi-1991}, and the mixed finite volume methods \cite{T1997}. For the sake of completeness we briefly recall the definition of this gradient discretisation. Let $\mathcal T=(\mesh,\mathcal F,\mathcal P, \mathcal V)$ be the polytopal mesh of the spatial domain $\O$ used in the previous section and described in \cite[Definition 7.2]{30}. The elements of the gradient discretisation are: 
\begin{itemize}
\item The discrete spaces are 
\[
X_{\disc,0}=\{ v=((\varphi_{K})_{K\in \mathcal{M}}, (\varphi_{\sigma})_{\sigma \in \cF})\;:\; \varphi_{K},\, \varphi_{\sigma} \in \RR,\; \varphi_\edge=0,\; \forall \edge \in \cF \cap \dr\O
\},
\]
\[
\begin{aligned}
X_{\disc,\dr\O}=\{ &v=((v_{K})_{K\in \mathcal{M}}, (v_{\sigma})_{\sigma \in \cF})\;:\; v_{K} \in \RR,\, v_{\sigma} \in \RR,\\
&v_K=0 \mbox{ for all } K\in \mesh,\,
v_\sigma=0 \mbox{ for all } \sigma \in \edgesint\}.
\end{aligned}
\]
\item The non conforming a piecewise affine reconstruction $\Pi_\disc$ is defined by
\[
\begin{aligned}
&\forall \varphi\in X_{\disc}, \forall K\in \mesh, \mbox{ for a.e. } \x \in K,\\
&\Pi_\disc \varphi=\varphi_K\mbox{ on $K$}.
\end{aligned}
\]
\item The reconstructed gradients is piecewise constant on the cells (broken gradient), defined by
\[
\begin{aligned}
&\forall \varphi\in X_{\disc},\; \forall K\in\mathcal M,\,\forall \sigma\in\cF_K,\\
&\nabla_\disc \varphi=\nabla_{K}\varphi+
\frac{\sqrt{d}}{d_{K,\sigma}}R_K(\varphi)\mathbf{n}_{K,\sigma} \mbox{ on } D_{K,\edge},
\end{aligned}
\]
where a cell--wise constant gradient $\nabla_K \varphi$ and a stabilisation term $R_K(\varphi)$ are respectively defined by:
\[
\nabla_{K}\varphi= \dsp\frac{1}{|K|}\sum_{\sigma\in \cF_K}|\sigma|\varphi_\edge\mathbf{n}_{K,\sigma} \mbox{ and } R_K(\varphi)=(\varphi_\edge - \varphi_K - \nabla_K \varphi\cdot(x_\edge -x_K))_{\edge\in\cF_K},
\]
in which $x_\edge$ is centre of mass of $\edge$, $x_K$ is the gravity centre of cell $K$, $d_{K,\edge}$ is the orthogonal distance between $x_K$ and $\edge \in \cF_K$, ${\bf n}_{K,\edge}$ is the unit vector normal to $\edge$ outward to $K$ and $D_{K,\edge}$ is the convex hull of $\edge \cup \{x_K\}$.
\item The interpolant $J_\disc: L^\infty(\O) \to X_{\disc}$ is defined by:
\[
\begin{aligned}
&\forall w \in L^\infty(\O)\;:\; J_\disc w=((w_K)_{K\in \mesh},(w_\edge)_{\edge\in \cF}),\\
&\forall K\in\mesh,\; w_K=\dsp\frac{1}{|K|}\dsp\int_K w(\x) \ud \x \mbox{ and } \forall \edge\in\cF,\; w_\edge=0.
\end{aligned}
\]
\item the interpolant $\cI_{\disc,\dr}:H^{\frac{1}{2}}(\dr\O) \to X_{\disc,\dr\O}$ is defined by
\[
\begin{aligned}
\forall g\in H^{\frac{1}{2}}(\dr\O)\,:\,{}&(\cI_{\disc,\dr}g)_{\sigma}=\frac{1}{|\sigma|}\int_\sigma g(x)\ud s(x),\\
&\mbox{ for all } \sigma \in\edgesext \mbox{ such that } \sigma\subset\dr\O.
\end{aligned}
\]
\end{itemize}
The HMM scheme for \eqref{rm-disc-pblm} is the gradient scheme \eqref{rm-weak} written with the gradient discretisation constructed above.

\par As a test, we consider the Brusselator reaction-diffusion model \eqref{rm-strong1}--\eqref{rm-strong5} with non-homogeneous Dirichlet boundary conditions over the domain $\Omega=[0,1]^2$. 
The reaction functions in the Brusselator system are defined as
\begin{subequations}
\begin{align}
 F(\bar u,\bar v)&=a-(b +1)\bar u+\bar u^2\bar v, \nonumber \\
 G(\bar u,\bar v)&=b\bar u-\bar u^2\bar v,\nonumber
\end{align}
\end{subequations}
where $a$ is positive constant and $b$ is a parameter that can be varied to result in a range of different patterns. With $\x=(x,y)\in\O$, the exact solution in such a case is given as \cite{5}
\begin{subequations}\label{exact-rm}
\begin{align}
 \bar u(\x,t)&=\exp(-x-y-0.5t),\\
 \bar v(\x,t)&=\exp(x+y+0.5t),
\end{align}
\end{subequations}
with parameters chosen as $a=0$, $b=1$, $\mu_{1}=\mu_{2}=0.25$.
\par The initial and the Dirichlet boundary conditions are extracted from the analytical solutions \eqref{exact-rm}. The simulation is performed on a sequence of triangular meshes and is done up to $T=1$. The chosen meshes are of size $h=0.125$, $h=0.0625$, $h=0.03125$, and $h=0.015625$, respectively with time step is fixed as $0.0001$. Table 1 shows the relative errors on $\bar u$ and $\bar v$ and the corresponding rates of convergence with respect to the mesh size $h$. The resultant errors on the solutions $\bar u$ and $\bar v$ are proportional to the mesh size $h$, indicating that the HMM scheme behaves very well. 
\par Moreover, the $L_2$ relative errors on the gradients of the solutions with respect to the mesh size $h$ are shown in log-log scale Figure \ref{fig1}a for $\nabla\bar u$ and in Figure \ref{fig1}b for $\nabla\bar v$. A line of slope one is added in both figures as a reference. We observe that the relative errors on $\nabla\bar u$ and $\nabla\bar v$ scale linearly with $h$, giving a rate of convergence of one, which are compatible with behaviour expectations associated with the low-order methods such as the HMM method. 
{\color{red}
\begin{table}[]
\begin{tabular}{c c c c c}
\hline
$h$&
$\frac{\| \bar u(\cdot,t^{(n)}) - \Pi_\disc u^{(n)}\|_{L^{2}(\O)}}{\| \bar u(\cdot,t^{(n)})\|_{L^{2}(\O)}}$&
Rate&
$\frac{\| \bar v(\cdot,t^{(n)}) - \Pi_\disc v^{(n)}\|_{L^{2}(\O)}}{\| \bar v(\cdot,t^{(n)})\|_{L^{2}(\O)}}$&
Rate  
\\ \hline
0.125&
0.000720746&
--&
0.000561639&
--
\\ 
0.0625&
0.000184132&
1.968753&
0.000140295&
2.0011797
\\ 
0.03125&
0.0000501972&
1.8750586&
0.0000342813&
2.03296997
\\
0.015625&
0.0000149187&
1.750485&
0.00000688301&
2.31630842
\\ \hline
\end{tabular}
\caption{The relative errors and convergence rates w.r.t. the mesh size $h$ at time $t=1$ for the Brusselator model with parameters chosen as $a=0$, $b=1$, $\mu_{1}=\mu_{2}=0.25$.}
\label{tab-test-2}   
\end{table}
}
\begin{figure}[ht]
	\begin{center}
	\includegraphics[scale=0.9]{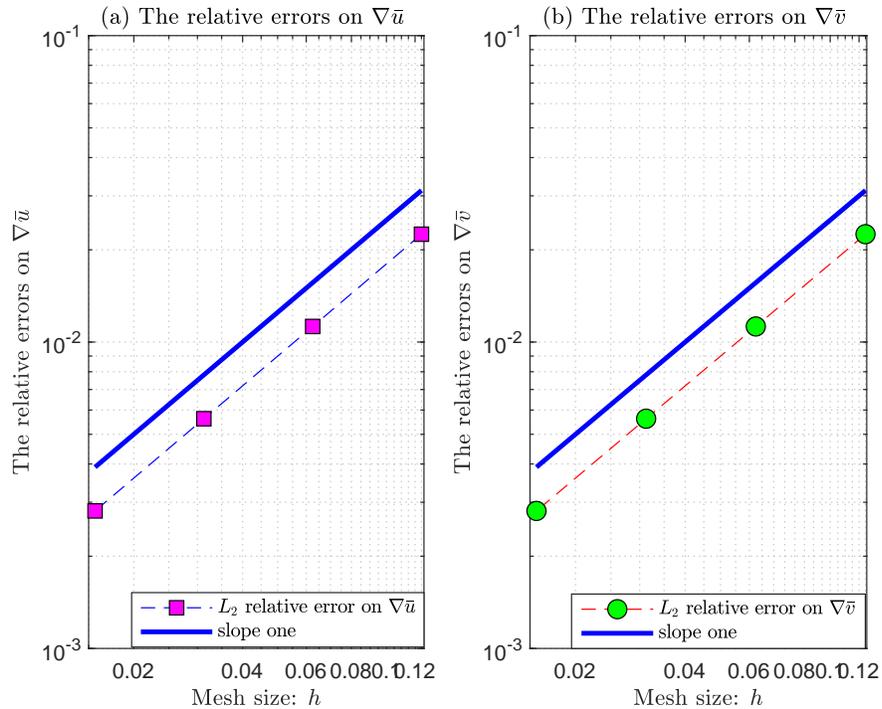}
	\end{center}
\caption{The relative errors on (a) $\nabla \bar u$ and (b) $\nabla \bar v$  w.r.t. the mesh size $h$ at time $t=1$ for the Brusselator model with parameters chosen as in Table 1.}
\label{fig1}
\end{figure}

\bigskip
\textbf{Acknowledgement}:\\
The authors would like to thank the Deanship of Scientific Research at Umm Al-Qura University for supporting this work Grant Code: 19-SCI-1-01-0027.


\bibliographystyle{siam}
\bibliography{BrusselatorRef}

\end{document}


Take $u_D,\; v_D \in L^2(0,T;H^1(\O))$ such that $\gamma u_D=g$ and $\gamma v_D=h$. Under Assumptions \ref{assump-rm}, the weak solution of \eqref{rm-strong1}--\eqref{rm-strong5} is seeking $(\widehat u,\widehat v)=(\bar u-u_D,\bar v-v_D)$ satisfying
\begin{subequations}\label{rm-weak}
\begin{equation*}
\begin{aligned}
&\widehat u \in L^2(0,T;H^1(\O)) \cap C([0,T];L^2(\O)),\; \bar v\in L^2(0,T;H^1(\O)) \cap C([0,T];L^2(\O)),\\
&\forall \varphi \in L^2(0,T;H_0^1(\O)),\; \partial_t \varphi \in L^2(0,T;L^2(\O)),\; \varphi(\cdot,T)=0 ,\\
&\forall \psi \in L^2(0,T;H_0^1(\O)),\; \partial_t \psi \in L^2(0,T;L^2(\O)),\;\psi(\cdot,T)=0,
\end{aligned}
\end{equation*}
\begin{equation}\label{rm-weak1}
\begin{aligned}
-\dsp\int_0^T \dsp\int_\O \widehat u(\x,t) \partial_t\varphi(\x,t) \ud \x  \ud t 
&+\mu_1\dsp\int_0^T\int_\O \nabla \widehat u(\x,t) \cdot \nabla \varphi(\x,t)\ud \x \ud t,\\
&-\dsp\int_\O u_{\rm ini}(\x)\varphi(\x,0) \ud x\\
{}&\quad= \dsp\int_0^T\dsp\int_\O F(\widehat u,\widehat v)\varphi(\x,t) \ud \x \ud t,
\end{aligned}
\end{equation}
\begin{equation}\label{rm-weak2}
\begin{aligned}
-\dsp\int_0^T \dsp\int_\O  \widehat v(\x,t)  \partial_t\psi(\x,t) \ud \x \ud t
&+\mu_2\dsp\int_0^T\int_\O \nabla \widehat v(\x,t) \cdot \nabla \psi(\x,t)\ud \x \ud t\\ 
&-\dsp\int_\O v_{\rm ini}(\x)\psi(\x,0) \ud x\\
{}&\quad= \dsp\int_0^T\dsp\int_\O G(\widehat u,\widehat v)\psi(\x,t) \ud \x \ud t.
\end{aligned}
\end{equation}
\end{subequations}